\numberwithin{equation}{section} 
\newtheorem{theorem}{Theorem}
\newtheorem{proposition}[theorem]{Proposition}
\newtheorem{lemma}[theorem]{Lemma}
\theoremstyle{remark}
\newtheorem{remark}[theorem]{Remark}
\theoremstyle{definition}
\newtheorem{example}[theorem]{Example}
\newcommand{\R}{\mathbb{R}}
\newcommand{\disp}{\displaystyle}
\newcommand{\ba}{\begin{array}}
\newcommand{\ea}{\end{array}}
\newcommand{\tld}[1]{\widetilde{#1}}
\newcommand{\bthm}{\begin{theorem}}
\newcommand{\ethm}{\end{theorem}}
\newcommand{\bprop}{\begin{proposition}}
\newcommand{\eprop}{\end{proposition}}
\newcommand{\blemma}{\begin{lemma}}
\newcommand{\elemma}{\end{lemma}}
\newcommand{\bexmpl}{\begin{example}}
\newcommand{\eexmpl}{\end{example}}
\newcommand{\beqn}{\begin{equation}}
\newcommand{\eeqn}{\end{equation}}
\newcommand{\beqns}{\begin{equation*}}
\newcommand{\eeqns}{\end{equation*}}
\newcommand{\RRR}{\mathbb{R}^3}
\newcommand{\ol}{\overline}
\newcommand{\Stwo}{{\mathbb{S}^2}}
\renewcommand{\leq}{\leqslant}
\renewcommand{\geq}{\geqslant}
\definecolor{mygreen}{rgb}{0.1,0.75,0.2}
\newcommand{\N}{\mathbb{N}}
\newcommand{\eps}{\epsilon}
\newcommand{\E}{\mathsf{E}}
\newcommand{\fancE}{\mathscr{E}}
\newcommand{\Om}{\Omega}
\newcommand{\calU}{\mathcal{U}}
\newcommand{\calV}{\mathcal{V}}
\DeclareMathOperator{\Per}{Per}
\DeclareMathOperator{\dive}{div}
\DeclareMathOperator{\loc}{loc}
\title[Liquid drop and TFDW models with long-range attraction]{Ground-states for the liquid drop and TFDW models with long-range attraction}
\author{Stan Alama}
\address{Dept of Math and Stat, McMaster University, Hamilton, ON}
\email{alama@mcmaster.ca}
\author{Lia Bronsard}
\address{Dept of Math and Stat, McMaster University, Hamilton, ON}
\email{bronsard@mcmaster.ca}
\author{Rustum Choksi}
\address{Dept of Math and Stat, McGill University, Montr\'{e}al, QC}
\email{rustum.choksi@mcgill.ca}
\author{Ihsan Topaloglu}
\address{Dept of Math and Appl Math, Virginia Commonwealth University, Richmond, VA}
\email{iatopaloglu@vcu.edu}
\date{\today}                                        
\subjclass{35Q40, 35Q70, 49Q20, 49S05, 82D10}
\keywords{liquid drop model, background nucleus, Thomas-Fermi-Dirac-von Weizs\"{a}cker theory, ground-states, global existence, concentration-compactness method}
\begin{document}

\begin{abstract}
We prove that both the liquid drop model in $\R^3$ with an attractive background nucleus and the Thomas-Fermi-Dirac-von Weizs\"{a}cker (TFDW) model attain their ground-states \emph{for all} masses as long as the external potential $V(x)$ in these models is of long range, that is, it decays slower than Newtonian (e.g., $V(x)\gg |x|^{-1}$ for large $|x|$.) For the TFDW model we adapt classical concentration-compactness arguments by Lions, whereas for the liquid drop model with background attraction we utilize a recent compactness result for sets of finite perimeter by Frank and Lieb.
\end{abstract}

\maketitle

\section{Introduction}\label{sec:intro}

In this note we consider ground-states of two mass-constrained variational problems containing an external attractive potential to the origin which is super-Newtonian at long ranges. The first problem 
consists of a variant of Gamow's liquid drop problem \cite{Ga1930, We1935, KnMu2014} perturbed by an attractive background  potential $V(x)$, with long range decay, in the sense that $V(x)\gg |x|^{-1}$ for large $|x|$. 
The second problem is a variant of the Thomas-Fermi-Dirac-von Weizs\"{a}cker (TFDW) functional, again subject to an external attractive potential $V(x)$ which is ``super-Newtonian''. 

Let us first state the two problems precisely. 
The variant of the liquid drop problem is given by
	\beqn \label{eqn:e_Z}
		e_V(M) \,:=\, \inf \left\{ \E_V(u) \colon  u\in BV(\RRR;\{0,1\}), \ \int_{\RRR} u\,dx=M\right\}, \tag{LD}
	\eeqn
where the energy functional $\E_V$ is defined as
	\beqn \label{eqn:energy}
		\E_V(u) := \int_{\RRR} |\nabla u| + \int_{\RRR}\!\int_{\RRR} \frac{u(x)u(y)}{|x-y|}\,dxdy - \int_{\RRR} V(x)u(x)\,dx.
	\eeqn
Here the first term in $\E_V$ computes the total variation of the function $u$, i.e.,
	\[
		\int_{\RRR} |\nabla u| = \sup\left\{ \int_{\RRR} u\dive\phi\,dx \colon \phi \in C_0^1(\RRR;\RRR), \ |\phi| \leq 1  \right\}
	\]
and is equal to $\Per_{\RRR}(\{x\in\RRR \colon u(x)=1\})$ since $u$ takes on only the values 0 and 1.

The variant of the TFDW problem we consider here is to find
	\beqn \label{eqn:I_Z}
		I_V(M) \,:=\, \inf \left\{ \fancE_V(u) \colon u\in H^1(\RRR), \ \int_{\RRR} |u|^2\,dx=M \right\}, \tag{TFDW}
	\eeqn
where
	\beqn \label{eqn:TFDW_energy}
		\fancE_V(u) := \int_{\RRR} \Bigg( |\nabla u|^2 + |u|^{10/3} - |u|^{8/3} - V(x)|u|^2 \Bigg)dx + \frac{1}{2} \int_{\RRR}\!\int_{\RRR} \frac{|u(x)|^2 |u(y)|^2}{|x-y|}\,dxdy.
	\eeqn

In the original TFDW problem (see the works of Benguria, Br\'{e}zis, Lieb \cite{BeBrLi1981}, Le Bris, Lions \cite{LeLi2005}, and Lieb \cite{Lieb1981} for detailed surveys on this classical theory), the potential is taken to be 
	\[ 
		V_Z(x) := \frac{Z}{|x|}, 
	\]
simulating an attracting  point charge at the origin with charge $Z$. 
With this physical choice of potential, both the liquid drop and TFDW  problems have been shown to exhibit existence for small $M$  and nonexistence for large $M$.  
In particular, for the liquid drop model it has recently been shown by  Lu and Otto, and by Frank, Nam and van den Bosch that
	\begin{itemize}
		\item (nonexistence, Theorem 1.4 of Frank, Nam, van den Bosch \cite{FrNaBo2016}) if $\E_{V_Z}$ has a minimizer, then $M \leq \min \{ 2Z+8, Z+CZ^{1/3}+8\}$ for some $C>0$; and,
		\item (existence, Theorem 2 of Lu, Otto \cite{LO2}) there exists a constant $c>0$ so that for $M \leq Z+c$ the unique minimizer of $\E_{V_Z}$ is given by the ball $\chi_{B(0,R)}$, 
	\end{itemize}
where $R=(M/\omega_3)^{1/3}$ and $\omega_3$ denotes the volume of the unit ball in $\RRR$.
Similar (and older) existence results hold for the TFDW problem.  The existence of solutions to the classical TFDW problem was established by Lions \cite{Lions1987} for $M\leq Z$  and extended to $M\leq Z+c$ for some constant $c>0$ by Le Bris \cite{LeBris1993}. The nonexistence of ground-states for large values of $M$ (or small values of $Z$) is only recently proved by Frank, Nam and van den Bosch \cite{FrNaBo2016,NaBo17}. In a separate paper Nam and van den Bosch \cite{NaBo17} also consider more general external potentials which are short-ranged, i.e., $\lim_{|x|\to\infty} |x|V(x)=0$.  Motivated by their result, here we look at the complementary case, in which the external potential is asymptotically {\em larger} than Newtonian at infinity.

These functionals can be viewed as mathematical paradigms for the existence and nonexistence of coherent structures based upon a mass parameter. Since both problems are driven by a repulsive potential of Coulombic (Newtonian) type, it is natural to expect that if the confining external potential $V$ was even slightly stronger (at 
long ranges) than Newtonian, global existence would be restored for all masses. In this note we prove that this is indeed the case.

For the liquid drop problem $e_V$, we consider the external potentials $V$ which satisfy the following hypotheses:
\medskip
		\begin{enumerate}[label=(H{\arabic*})]
				\addtolength{\itemsep}{8pt}
						\item \label{itm:H1} $V\geq 0$, and $V\in L^1_{\loc}(\RRR)$.
						\item \label{itm:H4} $\disp \lim_{t\to\infty} t \,\left( \inf_{|x| = t} V(x)\right)= \infty$.
						\item \label{itm:H5} $\disp \lim_{|x|\to\infty} V(x) = 0$.
		\end{enumerate}	
\medskip

On the other hand, to ensure that the energy $\fancE_V$ is bounded below, we assume that $V$ satisfies
\medskip
		\begin{enumerate}[label=(H{\arabic*}$^\prime$)]
				\addtolength{\itemsep}{8pt}
						\item \label{itm:H1pr} $V\geq 0$, and $V \in L^{3/2}(\RRR)+L^\infty(\RRR)$,
		\end{enumerate}	
\medskip
instead of \ref{itm:H1}, along with \ref{itm:H4} and \ref{itm:H5}. 
\medskip
Hypothesis \ref{itm:H4} implies that these potentials are long-ranged but only slightly more attractive than Newtonian. A typical example of such an external potential is
	\[
	 V(x) = \frac{1}{|x|^{1-\eps}}
	\]
for $0<\eps<1$, or a linear combination of functions of this form. 
Although these potentials have only slightly longer range than $|x|^{-1}$, this is sufficient to ensure existence of ground-states for the modified liquid drop and TFDW problems, $e_V$ and $I_V$, \emph{for all} $M>0$.

\bthm[Liquid drop model] \label{thm:existence1}
 Suppose $V$ satisfies {\rm\ref{itm:H1}--\ref{itm:H5}}. Then for any $M>0$ the problem $e_V(M)$ given by \eqref{eqn:e_Z} has a solution.
\ethm

\medskip

\bthm[TFDW model] \label{thm:existence2}
Suppose $V$ satisfies {\rm\ref{itm:H1pr}, \ref{itm:H4}}, and {\rm\ref{itm:H5}}. Then for any $M>0$ the problem $I_V(M)$ given by \eqref{eqn:I_Z} has a solution.
\ethm

\medskip

\begin{remark}\label{rem:splitting}
While we do obtain existence of ground-states for all masses $M$, we do {\em not} expect that the attractive potential $V$ stabilizes the single droplet solution $\chi_{B(0,(M/\omega_3))^{1/3}}$ for large values of $M$.  Rather, we expect that mass splitting does indeed occur (as it does for the unperturbed liquid drop problem \cite{LO1,KnMu2014,KnMuNo2016}) but the resulting components are confined by the external potential $V$ and cannot escape to infinity. This expectation is reflected in our approach to the proof of the two theorems above.
\end{remark}

\medskip

While the mathematical motivations for these results are clear, let us now comment on 
the physicality of the long-range super-Newtonian attraction.  
For the quantum TFDW model, we do not know of any physical situation which would support  an ``exterior'' potential  producing  super-Newtonian attraction. 
However we note that these functionals, in particular the liquid drop energy, can be used as phenomenological models for charged or gravitating masses at all length scales.  
Consideration of super-Newtonian forces appears in several theories at the cosmological level, and in fact the validity of Newton's law at long distances has been a longstanding interest in physics. As Finzi notes \cite{Finzi1963}, for example, stability of cluster of galaxies implies stronger attractive forces at long distances than that predicted by Newton's law. Motivated by similar observations, Milgrom \cite{Milgrom1983} introduced the modified Newtonian dynamics (MOND) theory which suggests that the gravitational force experienced by a star in the outer regions of a galaxy must be stronger than Newton's law (see also works of Bugg \cite{Bugg2015}, and Milgrom \cite{Milgrom2015} for a survey, and Bekenstein's work \cite{Bekenstein2004}).

\medskip

\subsection*{Outline of the paper:}  The proofs of Theorems \ref{thm:existence1} and \ref{thm:existence2} follow the same basic strategy:  to obtain a contradiction, we assume that minimizing sequences lose compactness, and use concentration compactness techniques to show that it is because of the splitting and dispersion of mass to infinity (``dichotomy'').  For the liquid drop model, we utilize a recent technical concentration-compactness result for sets of finite perimeter by Frank and Lieb \cite{FrLi2015} to prove a lower bound on the energy in case minimizing sequences $u_n$ lose compactness via splitting, of the form
\beqn\label{eq:lower}
 \lim_{n\to\infty}\E_V(u_n) \geq e_V(m_0) + e_0(m_1 )+ e_0(M-m_0-m_1 ),  
\eeqn
where $0<m_i<M$ with $m_0+m_1\leq M$. However, thanks to the super-Newtonian decay of $V$ we then show that $e_V(M)$ actually lies strictly below the value given in \eqref{eq:lower}.  This is a variant on the original ``strict subadditivity'' argument introduced by Lions \cite{Lions1987} for the classical TFDW model with $V(x)=|x|^{-1}$, and subsequently used in innumerable treatments of variational problems with loss of compactness.

In section 3 we adapt recent arguments by Nam and van den Bosch \cite{NaBo17} along with estimates of Lions \cite{Lions1987} and Le Bris \cite{LeBris1993} to prove Theorem \ref{thm:existence2}.  Although the variational structure of TFDW is nearly the same as the liquid drop model, the components are not compactly supported, so we require an additional argument to verify that they decay sufficiently rapidly (in fact exponentially) in order to calculate the interaction between components.

\section{Proof of Theorem \ref{thm:existence1}} \label{sec:thm_i}

Our proof relies on a recent concentration-compactness type result for sets of finite perimeter by Frank and Lieb \cite{FrLi2015}. 
While similar compactness results are known and could be adapted here (for example, the classical theory of Lions  \cite{Lions84}, and 
results for minimizing clusters which can be found in Chapter 29 of Maggi \cite{Maggi}), the results of Frank and Lieb are particularly well-suited for our purposes. Throughout the proof of Theorem \ref{thm:existence1}, we specifically use Proposition 2.1, and Lemmas 2.2 and 2.3 of Frank and Lieb \cite{FrLi2015}.

As noted in the introduction our goal is to obtain a splitting property \eqref{eq:lower} for $e_V(M)$ involving the ``minimization problem at infinity'' $e_0$ given by
	\beqn \nonumber
		e_0 (M)\, := \, \inf \left\{ \E_0(u) \colon u\in BV(\RRR;\{0,1\}), \ \text{and} \ \int_{\RRR} u\,dx=M  \right\},
	\eeqn
where
	\beqn	\nonumber
	  \E_0(u)\,	:=\, \int_{\RRR} |\nabla u| + \int_{\RRR}\!\int_{\RRR} \frac{u(x)u(y)}{|x-y|}\,dxdy.
	\eeqn

\bigskip

We will also use the following simple weak compactness result for the confinement term, which is convenient to state in general terms. 

\begin{lemma}\label{lem:confinement}
Let $A_n\subset\RRR$ be a sequence of sets with $|A_n|\leq M$ which converge to zero locally, i.e., $\chi_{A_n}\to 0$ in $L^1_{\loc}(\RRR)$. Then
	\[
	\int_{A_n}V\,dx=\int_{\RRR} V \chi_{A_n}\,dx \to 0 \qquad \text{as} \quad n\to\infty.
	\]
\end{lemma}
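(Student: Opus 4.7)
The plan is to split the integral $\int_{A_n} V\,dx$ into an inner piece on a large ball $B_R$ and an outer piece on $\RRR \setminus B_R$, and to control each separately by using, respectively, the local integrability of $V$ together with $|A_n \cap B_R|\to 0$, and the decay $V(x)\to 0$ at infinity. Notably, hypothesis \ref{itm:H4} is not needed here; only \ref{itm:H1} (which makes $V$ locally integrable) and \ref{itm:H5} are used.

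More concretely, fix $\varepsilon>0$. By \ref{itm:H5} one can choose $R>0$ such that $V(x)\leq \varepsilon/(2M)$ whenever $|x|\geq R$; then the outer piece is estimated by
\[
\int_{A_n\setminus B_R} V\,dx \,\leq\, \frac{\varepsilon}{2M}\,|A_n| \,\leq\, \frac{\varepsilon}{2}.
\]
For the inner piece, $V\in L^1(B_R)$ by \ref{itm:H1}, so by the absolute continuity of the Lebesgue integral there exists $\delta>0$ such that $\int_E V\,dx<\varepsilon/2$ for every measurable $E\subset B_R$ with $|E|<\delta$. The hypothesis $\chi_{A_n}\to 0$ in $L^1_{\loc}(\RRR)$ means precisely that $|A_n\cap B_R|\to 0$, so for all $n$ sufficiently large $|A_n\cap B_R|<\delta$ and hence $\int_{A_n\cap B_R} V\,dx<\varepsilon/2$. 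Combining the two estimates proves the lemma.

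This is essentially routine, and I do not expect a serious obstacle. The only mild subtlety is that $L^1_{\loc}$ convergence is weaker than a.e.\ convergence, so one cannot immediately invoke dominated convergence on the inner piece; invoking absolute continuity of the integral (applied to $V\in L^1(B_R)$) circumvents this without passing to subsequences and delivers the conclusion for the full sequence.
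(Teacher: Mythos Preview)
Your proof is correct and follows essentially the same approach as the paper: split $\int_{A_n}V\,dx$ into an outer piece on $\RRR\setminus B_R$ (controlled by \ref{itm:H5} and the uniform mass bound) and an inner piece on $B_R$ (controlled by \ref{itm:H1} and $|A_n\cap B_R|\to 0$). The only cosmetic difference is that the paper handles the inner piece by an explicit truncation $V\chi_{B_R}=V_1+V_2$ with $\|V_1\|_{L^1}$ small and $V_2$ bounded, whereas you invoke absolute continuity of the Lebesgue integral directly; these are equivalent, and your phrasing is slightly cleaner.
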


\begin{proof}
By hypothesis \ref{itm:H5}, for any $\epsilon>0$, there exists $R>0$ so that if $V_\infty:=V\chi_{B^c_R}$, then $0 \leq V_\infty <{\epsilon\over 3M}$. By \ref{itm:H1}, on the other hand, we define $V_1:=V\chi_{B_R\setminus E_K}$, where $E_K=\{x\in B_R \colon  0\leq V(x)\leq K\}$ and $K$ is chosen with $\| V_1\|_{L^1(\RRR)}<{\epsilon\over 3}$. Finally, let $V_2:=V\chi_{E_K}$, which is supported in $B_R$, and satisfies $\|V_2\|_{L^\infty(\RRR)}\leq K$.
%

Now with these choices we have a decomposition of $V$ into $V_1+V_2+V_\infty$, depending on $\epsilon$ and $K$. Using this decomposition
	\[
	  0\leq \int_{A_n} V \,dx  \leq  \|V_1\|_{L^1} + K|A_n\cap B_R| +  {\epsilon\over 3M}|A_n|
   <  K|A_n\cap B_R| + {2\epsilon\over 3} <\epsilon,
	\] 
for all $n$ large enough, since $|A_n\cap B_R|\to 0$ as $n\to\infty$ by local convergence of the sets $A_n$.
\end{proof}

\bigskip

\begin{proof}[Proof of Theorem \ref{thm:existence1}] 
First, by \ref{itm:H1} and \ref{itm:H5} we may write $V=V\chi_{B_R}+ V\chi_{B_R^c}\in L^1+L^\infty$, where $R$ is chosen so that $\|V\chi_{B_R^c}\|_{L^\infty(\RRR)}\leq 1$.  Then, for any $u=\chi_\Om$ with $|\Om|=M$,
	\[
		  \int_{\RRR} V u \,dx \leq \|V\|_{L^1(B_R)} + M,
	\]
hence, $e_V(M)>-\infty$.  Now, let $\{u_n\}_{n\in\mathbb{N}} \subset BV(\RRR;\{0,1\})$ with $\int_{\RRR}u_n\,dx=M$ be a minimizing sequence for the energy $\E_V$, i.e., $\lim_{n\to\infty} \E_V(u_n) = e_V(M)$. By the above estimate on the confinement term, the minimizing sequence has uniformly bounded perimeter, $\int_{\RRR} |\nabla u_n|\leq C$ independent of $n$. Define the sets of finite perimeter $\Om_n \subset \RRR$ so that $\chi_{\Om_n} = u_n$, and $|\Om_n|=M$ for all $n\in\N$.

\smallskip

\noindent \emph{Step 1.} First, we set up our contradiction argument.
 By the compact embedding of $BV(\RRR)$ in $L^1_{\loc}(\RRR)$ (see e.g. Corollary 12.27 in Maggi \cite{Maggi}) there exists a subsequence and a set of finite perimeter $\Om^0\subset\R^3$ so that $\Om_n\to\Om^0$ locally, that is, $u_n\to \chi_{\Om^0}:=w^0$ in $L^1_{\loc}(\RRR)$. At this point, we admit the possibility that $w^0\equiv 0$, i.e., $|\Om^0|=0$. However, in Step 4 we show that $w^0\not\equiv 0$.
 
If the limit set $|\Om^0|=M$, then we are done. Indeed, since $\{u_n\}_{n\in\N}$ is locally convergent in $L^1$, a subsequence converges almost everywhere in $\RRR$.  In addition, the norms converge, $\|u_n\|_{L^1}=M=\|\chi_{\Om^0}\|_{L^1}$, so by the Brezis-Lieb Lemma (see Theorem 1.9 in Lieb and Loss \cite{LiebLoss97}) we may then conclude that (along a subsequence) $u_n\to w^0=\chi_{\Om^0}$ in $L^1$ norm.  By the lower semicontinuity of the perimeter (Proposition 4.29 in Maggi \cite{Maggi}) and of the interaction terms (Lemma 2.3 of Frank and Lieb \cite{FrLi2015})
	\[
	  \int_{\RRR} |\nabla w^0 | \leq \liminf_{n\to\infty}\int_{\RRR} |\nabla u_n | \qquad
   \int_{\RRR}\!\int_{\RRR} \frac{w^0(x)w^0(y)}{|x-y|}\,dxdy \leq \liminf_{n\to\infty}
      \int_{\RRR}\!\int_{\RRR} \frac{u_n(x)u_n(y)}{|x-y|}\,dxdy.
	\]    
To pass to the limit in the confinement term, we apply Lemma~\ref{lem:confinement} to the sequence $u_n-w^0\to 0$ in $L^1(\RRR)$, and together with the above we have
	\[
		\E_V(w^0) \leq \liminf_{n\to\infty} \E_V(u_n).
	\]
Therefore we conclude that $w^0=\chi_{\Omega^0}$ attains the minimum value of $E_V$, and the proof is complete.  To derive a contradiction, we now assume that $m_0:=|\Omega^0|<M$.  

\medskip
 
\noindent \emph{Step 2.}  Next, we show that the energy splits.  
First, assume that $0 <|\Om^0|<M$. We apply Lemma 2.2 of Frank and Lieb \cite{FrLi2015} (with $x_n=x^0_n=0$):  there exists $r_n>0$ such that the sets
	\[
		\calU_n^0 = {\Om}_n \cap B_{r_n} \quad\text{ and } \quad \calV_n^0 = {\Om}_n \cap (\RRR \setminus \ol{B}_{r_n})
	\]
satisfy
\begin{gather*}
		\chi_{\calU_n^0} \to \chi_{\Om^0} \quad \text{in } L^1(\RRR), \quad \chi_{\calV_n^0}\to 0 \quad \text{in } L^1_{\loc}(\RRR),  \\  
		\lim_{n\to\infty} |\calU_n^0| = |\Om^0|=m_0, \qquad
		\Per \Om^0\leq \liminf_{n\to\infty} \Per \calU_n^0,
	\\
		\text{and}\quad 
		\lim_{n\to\infty} (\Per \Om_n -\Per \calU_n^0 -\Per \calV_n^0)=0.
\end{gather*}

We now define $w_n^0(x) := \chi_{\calU^0_n}(x)$, $w^0(x) := \chi_{\Om^0}(x)$, $\Om^0_n:=\calV^0_n$, and $u_n^0(x) := \chi_{\Om^0_n}(x)$ so that $u_n = w_n^0 + u_n^0=w^0+ u_n^0 +o(1)$ in $L^1(\RRR)$, and $u_n^0\to 0$ in $L^1_{\loc}$.  
In particular, by Lemma~\ref{lem:confinement}, 
$$  \int_{\RRR} V u_n \,dx = \int_{\RRR} V w^0 \,dx + o(1).  $$
Using Lemma 2.3 in Frank and Lieb \cite{FrLi2015}, the nonlocal interaction term in $\E_V$ splits in a similar way as the perimeter, 
\begin{align*}
  \int_{\RRR}\int_{\RRR} {u_n(x)\, u_n(y)\over |x-y|}\, dx dy
  &= \int_{\RRR}\int_{\RRR}{w_n^0(x)\, w_n^0(y)\over |x-y|}\, dx dy + 
   \int_{\RRR}\int_{\RRR} {u_n^0(x)\, u_n^0(y)\over |x-y|}\, dx dy + o(1)\\
   &= \int_{\RRR}\int_{\RRR}{w^0(x)\, w^0(y)\over |x-y|}\, dx dy + 
   \int_{\RRR}\int_{\RRR} {u_n^0(x)\, u_n^0(y)\over |x-y|}\, dx dy + o(1),
\end{align*}
and thus the energy splits, up to a small error,
	\beqn \label{eq:splits}
		\E_V(u_n) = \E_V(w_n^0) + \E_0(u_n^0) + o(1)\geq \E_V(w^0) + \E_0(u_n^0) + o(1).  
	\eeqn

In the case $|\Om^0|=0$ (which we eliminate in Step 4 below,) this splitting becomes trivial, with $w^0\equiv 0$ and $u_n^0=u_n$.

\medskip

\noindent \emph{Step 3.} Now we repeat the above procedure to locate a concentration set for the remainder $u_n^0$.  We argue as above, but with $u_n^0$ replacing $u_n$, that is, the remainder set $\Om^0_n=\calV^0_n$ replacing $\Om_n$. We know that $u_n^0=\chi_{\Om_n^0}\to 0$ locally in $L^1(\RRR)$, $|\Om^0_n|=M-m_0+o(1)\in (0,M]$, and $\E_V(u_n^0)$ (and hence $\Per \Om_n^0$) are uniformly bounded.  By Proposition 2.1 in Frank and Lieb \cite{FrLi2015} there exists a set $\Om^1$ with $0<|\Om^1|\leq M-m_0$ and a sequence of translations $x_n\in\RRR$ such that for some subsequence
$\chi_{\Om^0_n- x_n}\to \chi_{\Om^1}$ in $L^1_{\loc}(\RRR)$.  Since $\chi_{\Om_n^0}\to 0$ $L^1_{\loc}(\RRR)$, we have that the translation points $|x_n|\to\infty$ as $n\to\infty$. 
Again, by Lemmas 2.2 and 2.3 of Frank and Lieb \cite{FrLi2015}, and Lemma~\ref{lem:confinement} as in Step 2, we similarly obtain a disjoint decomposition $\Om^0_n-x_n=\calU^1_n \cup \calV^1_n$, with $\chi_{\calU^1_n}\to \chi_{\Om^1}$ in $L^1(\RRR)$, $\chi_{\calV^1_n}\to 0$ in $L^1_{\loc}(\RRR)$, and for which the energy splits as in \eqref{eq:splits}, namely,
	\[
		  \E_V(u_n^0) =\E_0(u_n^0)+o(1)\geq \E_0(w^1)+ \E_0(u^1_n) + o(1),
	\]
where $w^1:=\chi_{\Om^1}$, $u^1_n=\chi_{\calV^1_n+x_n}\to 0$ in $L^1_{\loc}(\RRR)$, and $|\calV^1_n|=|\calV^0_n|-m_1+o(1)$.  We denote the re-centered remainder set $\Om^1_n:=\calV^1_n+x_n$, so that $u^1_n(x)=\chi_{\Om^1_n}(x)$.
Combining with the previous step, we now have
	\[
	  \E_V(u_n)\geq \E_V(w^0) + \E_0(w^1) + \E_0(u^1_n)+ o(1) \quad\text{and}\quad M=m_0+m_1+|\Om^1_n|+ o(1).
	\]
This, combined with the continuity of $e_0$ (see e.g. Lemma 4.8 in the work of Kn\"{u}pfer, Muratov and Novaga \cite{KnMuNo2016}) yields a lower bound estimate in case of splitting,
\beqn\label{eq:energysplit}
   e_V(M)\geq e_V(m_0) + e_0(m_1)+ e_0(M-m_0-m_1).
\eeqn

\medskip

\noindent \emph{Step 4.} We claim that $w^0 \not\equiv 0$. For a contradiction, assume $w^0\equiv 0$. Define a sequence by $w_n(x):=u_n(x+x_n)$ using the translation sequence found above. Then $w_n \to w^1$ in $L^1(\RRR)$ and $w_n-u_n^0 \to 0$ in $L^1_{\loc}(\RRR)$. This implies, by Lemma \ref{lem:confinement}, that $\lim_{n\to\infty} \int_{\RRR} V(w_n-u_n^0)\,dx=0$. Now this limit and the translation invariance of the first two terms of $\E_V$ yield
	\[
		\E_V(w_n)-\E_V(u_n) = -\int_{\RRR} V (w_n-u_n)\,dx \longrightarrow - \int_{\RRR} V w^1 dx  <0, 
	\]
hence, a contradiction.

\medskip

\noindent \emph{Step 5.} 
 Now we prove that $e_V(m_0)=\E_V(w^0)$  and $e_0(m_1)=\E_0(w^1)$. By subadditivity (see Lemma 4 of Lu and Otto \cite{LO2} and Lemma 3 in their earlier work \cite{LO1}, or Step 5 below) we have a rough upper bound estimate of the form
	$$
		e_V(M) \leq e_V(m_0) + e_0(m_1)+ e_0(M-m_0-m_1).
 	$$
 Combined with \eqref{eq:energysplit}, this yields
 	\begin{align*}
 			e_V(m_0) + e_0(m_1)+ e_0(M-m_0-m_1) &\geq e_V(M)  \\
			&\geq  \E_V(w^0) + \E_0(w^1)+\liminf_{n\to\infty} \E_0(u_n^1) \\
			  &\geq e_V(m_0) + e_0(m_1)+ e_0(M-m_0-m_1).
 	\end{align*}
 Hence,
 	\[
 		\big(\E_V(w^0)-e_V(m_0) \big) 
		+\big(\E_V(w^1)-e_V(m_1) \big) 
		 +  \big( \liminf_{n\to\infty} \E_0(u_n^1)-e_0(M-m_0-m_1)\big) = 0, 
 	\]
 and since every term in this sum are nonnegative we must conclude that
 	\[
 		\E_V(w^0)=e_V(m_0) \quad \text{ and }\quad \E_0(w^1)=e_0(m_1) . 	\]
 
 \medskip
 
\noindent \emph{Step 6.} Finally, we show, by means of an improved upper bound, that splitting leads to a contradiction, and hence the minimum must be attained.  It is here that we use the super-Newtonian  attraction hypothesis \ref{itm:H4}.  Since $w^0=\chi_{\Om^0}(x),\, w^1=\chi_{\Om^1}(x)$ are minimizers of $e_V$ and $e_0$ respectively, by regularity of minimizers\cite{KnMu2014,LO2} we may choose $R>0$ for which $\Om^0,\, \Om^1\subset B_R(0)$. Let $b\in\Stwo$ be any unit vector.  For $t$ sufficiently large so that $\Om^0\cap(\Om^1+tb)=\emptyset$, let  
$$  F(t):= \int_{\RRR}\!\int_{\RRR} \frac{w^0(x)w^1(y-tb)}{4\pi|x-y|}\,dxdy, \ \text{and} 
   \ G(t):=\int_{\RRR} V(x)w^1(x-tb)\,dx.  $$
We now estimate each; first,
\begin{align*}
F(t)
&\leq \int_{B_R(0)} \int_{B_R(tb)} {1\over 4\pi |x-y|}dx\, dy 
\leq {|B_R|^2\over 4\pi (t - 2R)} \leq {|B_R|^2\over 2\pi t},
\end{align*}
for all $t$ large enough. 

To estimate $G(t)$ from below, we recall from \ref{itm:H4} that for any $A>0$ there exists $t_1>0$ such that for all $t>t_1$, 
$$  \inf_{|x|=t} V(x) \geq {A\over t}.  $$
Thus, for each $i=1,\dots,N$, as $t\to\infty$,
$$  t\int_{\RRR} V(x) w^1(x-tb)\, dx = \int_{\Om^1} t V(x+tb)\, dx
     \geq \int_{\Om^1} {tA\over |x+tb|} dx\longrightarrow A|\Om^1|,
$$
by dominated convergence, and hence $\lim_{t\to\infty} tG(t)=\infty$.  Thus, $t(F(t)-G(t))\to -\infty$ as $t\to\infty$.  Choose $\eps>0$ and $t_0>0$ such that 
$$  F(t_0)-G(t_0)<-\eps<0.  $$

With this choice of $\eps>0$, we may choose a compact set $K=K(\eps)$ for which $|K|=M-m_0-m_1$ and
$$  \E_0(\chi_K)< e_0(M-m_0-m_1) + {\eps\over 3}.  $$
Choose $\tau>0$ large enough so that $K_\tau:=K-\tau b$ satifies
 $$ \int_{\Om^i} \int_{K_\tau} {1\over 4\pi |x-y|} dx\, dy < {\eps\over 3}, \qquad \text{for}\quad i=0,1. $$
Using $v(x)=w^0(x)+w^1(x-t_0 b) + \chi_{K_\tau}$ as a test function, which is admissible for 
$e_V(M)$, we have
\begin{align*}
e_V(M)\leq \E_V(v) &= \E_V(w^0) +\E_0(w^1) + \E_0(\chi_{K_\tau}) + F(t_0)-G(t_0) \\
	&\qquad +
                   \sum_{i=0,1} \int_{\Om^i} \int_{K_\tau} {1\over 4\pi |x-y|}\, dxdy
                                   - \int_{K_\tau} V(x)\, dx \\
	&\leq e_V(m_0)+e_0(m_1) + e_0(M-m_0-m_1) -{\eps\over 3},
\end{align*} 
which contradicts the lower bound in case of splitting, \eqref{eq:energysplit}.  Thus we must have $|\Om^0|=M$ and $e_V(M)=\E_V(w^0)$, for any $M>0$.
\end{proof}
 

\section{Proof of Theorem \ref{thm:existence2}} \label{sec:thm_ii}

Now we turn our attention to $\fancE_V$ and $I_V(M)$ given by \eqref{eqn:TFDW_energy} and \eqref{eqn:I_Z}, respectively. As in the previous section, we define the ``problem at infinity'' by
	\beqn \nonumber
		I_0(M) \,:=\, \inf \left\{ \fancE_0(u) \colon u\in H^1(\RRR), \ \int_{\RRR} |u|^2\,dx=M \right\},
	\eeqn
where
	\beqn \nonumber
		\fancE_0(u) := \int_{\RRR} \Bigg( |\nabla u|^2 + |u|^{10/3} - |u|^{8/3} \Bigg)dx + \frac{1}{2} \int_{\RRR}\!\int_{\RRR} \frac{|u(x)|^2 |u(y)|^2}{|x-y|}\,dxdy.
	\eeqn

First we note that the problems $I_V$ and $I_0$ satisfy the following ``binding inequality'', which is the standard subadditivity condition from concentration-compactness principle. For the proof of the following lemma we refer to Lemma 5 in Nam and van den Bosch \cite{NaBo17}.

\begin{lemma} \label{lem:binding}
For all $0<m<M$ we have that  
	\[
		I_V(M)\leq I_V(m)+ I_0(M-m).
	\]
Moreover, $I_V(M)<I_0(M)<0$, $I_V(M)$ is continuous and strictly decreasing in $M$.
\end{lemma}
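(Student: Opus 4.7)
The plan is to prove, in order, the subadditivity inequality, the strict chain $I_V(M)<I_0(M)<0$, and finally continuity and strict monotonicity of $I_V$.

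For the subadditivity $I_V(M)\leq I_V(m)+I_0(M-m)$, I would use a ``paste and separate'' construction. Take near-minimizers $u_\eta$ for $I_V(m)$ and $v_\eta$ for $I_0(M-m)$, and truncate them to smooth compactly supported approximants at the cost of $o(1)$ in energy (controlling the five terms of $\fancE_V$, of which the cutoff of the Coulomb self-energy is the most delicate). Fix $b\in\Stwo$, and for $\tau$ large enough that $u_\eta$ and $v_\eta(\cdot-\tau b)$ have disjoint supports, set $w_\tau:=u_\eta+v_\eta(\cdot-\tau b)$; then $\|w_\tau\|_{L^2}^2=M$, so $w_\tau$ is admissible for $I_V(M)$. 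The local terms of $\fancE_V$ split exactly by disjoint support, the Coulomb cross-term $\int\!\int u_\eta^2(x)v_\eta^2(y-\tau b)/|x-y|\,dxdy$ is $O(1/\tau)$ by Newton's theorem, and the confinement cross-term $\int V(x)v_\eta^2(x-\tau b)\,dx$ vanishes as $\tau\to\infty$ by \ref{itm:H5} and dominated convergence. Sending $\tau\to\infty$ and then $\eta\to 0$ yields the inequality; the same argument with $V\equiv 0$ gives the corresponding subadditivity for $I_0$.

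For the strict negativity $I_0(M)<0$, I would first handle small $M$: fix $\phi\in C^\infty_c(\RRR)$ with $\|\phi\|_{L^2}^2=M$ and consider the mass-preserving dilation $\phi_\lambda(x):=\lambda^{3/2}\phi(\lambda x)$, giving $\fancE_0(\phi_\lambda)=A\lambda^2+B\lambda$ with $A:=\|\nabla\phi\|_{L^2}^2+\|\phi\|_{L^{10/3}}^{10/3}>0$ and $B:=\tfrac12\int\!\int \phi^2(x)\phi^2(y)/|x-y|\,dxdy-\|\phi\|_{L^{8/3}}^{8/3}$. Writing $\phi=\sqrt{M}\phi_0$ with $\|\phi_0\|_{L^2}=1$ shows $B$ scales as $c_1M^2-c_2M^{4/3}$, which is negative for $M$ small, and then $\min_\lambda(A\lambda^2+B\lambda)=-B^2/(4A)<0$. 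For arbitrary $M>0$, iterating the subadditivity above $N$ times (with $V\equiv 0$) gives $I_0(M)\leq N I_0(M/N)<0$ by choosing $N$ large enough that $M/N$ falls in the small-mass regime. To upgrade to $I_V(M)<I_0(M)$, take a compactly supported near-minimizer $u$ of $I_0(M)$ with $\supp u\subset B_R(0)$, and translate it by $y_0$ with $|y_0|$ large. By \ref{itm:H4}, there exist $A>0$ and $T>0$ with $V(x)\geq A/|x|$ for $|x|>T$, so for $|y_0|$ just larger than $T+R$, $\int V(x)u^2(x-y_0)\,dx\geq AM/(|y_0|+R)$, a fixed strictly positive constant. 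Since $\fancE_0$ is translation-invariant, $\fancE_V(u(\cdot-y_0))=\fancE_0(u)-\int V(x)u^2(x-y_0)\,dx<I_0(M)$ once the near-minimizer tolerance is chosen smaller than this gap.

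Strict monotonicity is then immediate: for $M'>M$, the previous parts combine to give $I_V(M')\leq I_V(M)+I_0(M'-M)<I_V(M)$. Continuity in $M$ follows by testing with the rescaling $u_t:=((1+t)M/\|u\|_{L^2}^2)^{1/2}u$ of a near-minimizer $u$ for $I_V(M)$, which yields $|\fancE_V(u_t)-\fancE_V(u)|=O(|t|)$ by direct computation, and this combined with the analogous estimate in the opposite direction gives continuity. The main technical obstacle in the whole argument, as is standard in binding-inequality proofs, is the initial truncation step: to preserve all five terms of $\fancE_V$ up to $o(1)$ error under a compactly supported cutoff, one must choose the truncation radius in tandem with the approximation parameter $\eta$ using the $H^1\cap L^{10/3}$ bounds on the near-minimizing sequences; everything else is fairly mechanical.
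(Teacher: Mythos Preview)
Your proposal is correct and follows the standard route for binding inequalities in TFDW-type problems. The paper itself does not give a proof: it simply refers to Lemma~5 of Nam and van den Bosch \cite{NaBo17}, so there is no in-paper argument to compare against directly. Your paste-and-separate construction for subadditivity, the scaling argument $\fancE_0(\phi_\lambda)=A\lambda^2+B\lambda$ for $I_0(M)<0$ on small masses followed by iteration, and the translation argument using \ref{itm:H4} for the strict inequality $I_V(M)<I_0(M)$ are all in the spirit of what one finds in \cite{NaBo17} and \cite{Lions1987}.

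One small point worth tightening: in your continuity argument, the estimate $|\fancE_V(u_t)-\fancE_V(u)|=O(|t|)$ requires a uniform bound on the individual terms $\|\nabla u\|_{L^2}^2$, $\|u\|_{L^{10/3}}^{10/3}$, $\|u\|_{L^{8/3}}^{8/3}$, $\int V|u|^2$, and the Coulomb self-energy, valid for near-minimizers at all masses in a neighborhood of $M$. This follows from the coercivity estimate in (what is here) Lemma~\ref{lem:3lem1}, whose proof only needs $\fancE_V(u)$ bounded above, not the full statement of the present lemma; so there is no circularity, but you should make the dependence explicit. Also, for the strict inequality $I_V(M)<I_0(M)$ you do not actually need \ref{itm:H4}: since $V\geq 0$ and $V\not\equiv 0$ by \ref{itm:H1pr} and \ref{itm:H4}, any compactly supported near-minimizer of $I_0(M)$ can be translated so that $\int V|u|^2>0$, which already gives the gap. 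Your argument via \ref{itm:H4} is of course also valid.
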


Next we prove that the ground-state value $I_V(M)$ is bounded.

\begin{lemma}\label{lem:3lem1} Let $\{u_n\}_{n\in\N}\subset H^1(\RRR)$ be a minimizing sequence for the energy $\fancE_V$ with $\int_{\RRR} |u_n|^2\,dx=M$. Then there exists constant $C_0>0$ such that $\|u_n\|_{H^1(\RRR)}^2 \leq C_0\,M$.
\end{lemma}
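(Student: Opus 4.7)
The plan is to extract coercivity on $\|\nabla u_n\|_{L^2}^2$ from $\fancE_V(u_n)$ by absorbing all negative contributions into the positive $|u|^{10/3}$, $\|u_n\|_{L^2}^2$, and $\|\nabla u_n\|_{L^2}^2$ terms, so that any upper bound on $\fancE_V(u_n)$ yields an $H^1$ bound. Since $\{u_n\}$ is minimizing and $I_V(M) < 0$ by Lemma~\ref{lem:binding}, in particular $\fancE_V(u_n) \leq I_V(M)+1$ for $n$ large, so it suffices to prove a lower bound
\[
   \fancE_V(u) \geq c\,\|\nabla u\|_{L^2}^2 - C(M)
\]
on any $u\in H^1(\RRR)$ with $\|u\|_{L^2}^2 = M$, for some $c>0$ and $C(M)<\infty$.

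First I would handle the negative Dirac term by interpolation. Writing $8/3 = \tfrac12\cdot 2 + \tfrac12\cdot \tfrac{10}{3}$ and applying Young's inequality pointwise gives
\[
    |u|^{8/3} \leq \tfrac{1}{2}|u|^2 + \tfrac{1}{2}|u|^{10/3},
\]
so $\int |u|^{8/3}\,dx \leq \tfrac{M}{2} + \tfrac{1}{2}\int |u|^{10/3}\,dx$, and the kinetic-plus-Dirac contribution satisfies
\[
\int |u|^{10/3}\,dx - \int |u|^{8/3}\,dx \geq \tfrac12 \int |u|^{10/3}\,dx - \tfrac{M}{2} \geq -\tfrac{M}{2}.
\]

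Next I would control the confinement term via hypothesis \ref{itm:H1pr}. Given $\delta>0$, using that $L^{3/2}\cap L^\infty$ is dense in $L^{3/2}$, I decompose $V = V_\delta^{(1)} + V_\delta^{(2)}$ with $\|V_\delta^{(1)}\|_{L^{3/2}(\RRR)} \leq \delta$ and $V_\delta^{(2)} \in L^\infty(\RRR)$ (concretely by truncating the $L^{3/2}$ component of $V$ at a sufficiently high level). H\"older's inequality and the Sobolev embedding $H^1(\RRR)\hookrightarrow L^6(\RRR)$ yield
\[
  \int V_\delta^{(1)}|u|^2\,dx \leq \|V_\delta^{(1)}\|_{L^{3/2}} \|u\|_{L^6}^2 \leq C_S\,\delta\, \|\nabla u\|_{L^2}^2,
\]
while $\int V_\delta^{(2)}|u|^2\,dx \leq \|V_\delta^{(2)}\|_{L^\infty} M$. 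Choosing $\delta := (4C_S)^{-1}$ makes the $L^{3/2}$ contribution absorbable into the kinetic term.

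Combining these bounds with the non-negativity of the Coulomb double integral gives
\[
  \fancE_V(u) \geq \|\nabla u\|_{L^2}^2 - \tfrac{M}{2} - \tfrac{1}{4}\|\nabla u\|_{L^2}^2 - \|V_\delta^{(2)}\|_{L^\infty} M
  \;=\; \tfrac{3}{4}\|\nabla u\|_{L^2}^2 - C_1(M),
\]
with $C_1(M)$ linear in $M$. Since $\fancE_V(u_n)$ is eventually bounded above by $I_V(M)+1$, this gives $\|\nabla u_n\|_{L^2}^2 \leq C_2(M)$, and together with $\|u_n\|_{L^2}^2 = M$ produces the desired bound $\|u_n\|_{H^1}^2 \leq C_0\,M$ (with $C_0$ depending on $M$ through $I_V(M)$ and $\|V_\delta^{(2)}\|_{L^\infty}$).

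The only nontrivial step is the absorption of the $L^{3/2}$ part of $V$ via Sobolev embedding with arbitrarily small prefactor; everything else is routine pointwise interpolation and non-negativity of the Coulomb term.
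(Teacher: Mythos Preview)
Your proof is correct and follows essentially the same route as the paper's: decompose $V$ into a small $L^{3/2}$ piece absorbed into $\|\nabla u\|_{L^2}^2$ via H\"older--Sobolev plus a bounded piece controlled by $M$, bound the TFDW potential pointwise (you use AM--GM $|u|^{8/3}\leq \tfrac12|u|^2+\tfrac12|u|^{10/3}$, the paper completes the square $|u|^{10/3}-|u|^{8/3}\geq -\tfrac14|u|^2$), and drop the nonnegative Coulomb term. The only cosmetic difference is that the paper uses $\fancE_V(u_n)<0$ (from $I_V(M)<0$) rather than $\fancE_V(u_n)\leq I_V(M)+1$ as the upper bound, which yields a constant $C_0$ genuinely independent of $M$; since you already invoke $I_V(M)<0$, this one-line sharpening is available to you as well.
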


\begin{proof}
First, we note that $I_V(M)<0$ for any $M>0$.  Indeed, in the proof Lemma 5 of Nam and van den Bosch \cite{NaBo17} it is shown that $I_0(M)<0$, and $\fancE_V(u)\leq \fancE_0(u)$ holds for all $u\in H^1(\RRR)$ with $\int_{\RRR} |u|^2\,dx=M$. We first claim that the quadratic form defined by the Schr\"odinger operator $-\Delta - V(x)$ is bounded below, i.e., that there exists $\lambda>0$ with
	\[
	   \int_{\RRR} \big( |\nabla u|^2 - V(x)|u|^2\big) dx \geq \frac12 \|u\|^2_{H^1}-\lambda\|u\|_{L^2}^2,
	\]
for all $u\in H^1(\R^3)$.  To see this, we note that by \ref{itm:H1pr} we may write $V=V_1+V_2$ with $V_1\in L^{3/2}(\RRR)$ and $V_2\in L^\infty(\RRR)$. Moreover, we may assume that $\| V_1\|_{L^{3/2}(\RRR)}<\eps$ for some $\eps>0$ to be chosen later. By the H\"older and Sobolev inequalities it follows that
	\[
		  \int_{\RRR} |V_1|\, |u|^2\, dx \leq \| V_1\|_{L^{3/2}(\RRR)} \|u\|_{L^6(\RRR)}^2 \leq \eps\, S_3 \, \|\nabla u\|_{L^2(\RRR)}^2,
	\]
where $S_3>0$ is the Sobolev constant.  Thus,
	\[
		  \int_{\RRR} \left( |\nabla u|^2 - V(x) |u|^2 \right) dx \geq (1-\eps\, S_3) \|\nabla u\|_{L^2(\RRR)}^2 - \|V_2\|_{L^\infty(\RRR)}\|u\|_{L^2(\RRR)}^2,
	\]
and the lower bound is obtained by choosing
	\[
		\eps=\frac{1}{2S_3} \quad \text{ and } \quad \lambda=\|V_2\|_{L^\infty(\RRR)}+\frac{1}{2}.
	\]

Using the elementary inequality
	\[  
		 |u|^{10/3} - |u|^{8/3} = \left( |u|^{5/3} -\frac12 |u|\right)^2 - \frac14 |u|^2 \geq -\frac14 |u|^2
	\]
to estimate the nonlinear potential, we obtain the lower bound
\begin{align*}
 \fancE_V(u_n) &\geq \int_{\RRR} \big(  |\nabla u_n|^2 - V(x)|u_n|^2 \big)\, dx - \frac{1}{4} \|u_n\|_{L^2(\RRR)}^2 \\
&\geq  \frac12 \|u_n\|_{H^1}^2
  -\left(\lambda+\frac{1}{4}\right)\|u_n\|_{L^2(\RRR)}^2 
  = \frac12 \|u_n\|_{H^1}^2- {C_0\over 2}\,M
\end{align*}
 Since $I_V(M)<0$, for $n\in\N$ sufficiently large we have that $\fancE_V(u_n) <0$. Referring back to the above inequalities we obtain $\|u_n\|_{H^1(\RRR)}^2 \leq C_0\,M$.
\end{proof}

We now begin the proof of Theorem \ref{thm:existence2}. 

\begin{proof}[Proof of Theorem \ref{thm:existence2}]
Let $\{u_n\}_{n\in\N}$ be a minimizing sequence for the energy functional $\fancE_V$ such that $\int_{\RRR} |u_n|^2\,dx=M$. 

\smallskip

\noindent \emph{Step 1.} First, note that by the uniform $H^1$-bound in Lemma \ref{lem:3lem1} we may extract a subsequence so that $u_n \rightharpoonup v^0$ weakly in $H^1(\RRR)$ and strongly in $L^q_{\loc}(\RRR)$ for all $2\leq q<6$.  Let $v_n:= u_n- v^0$, so $v_n\rightharpoonup 0$ weakly in $H^1(\RRR)$ and strongly in $L^q(\RRR)$ on compact sets as $n\to\infty$.  In particular, by hypotheses \ref{itm:H1pr}, \ref{itm:H5} we have that
\begin{equation}\label{vanish}
\int_{\RRR} V(x) |v_n|^2\, dx \to 0
\end{equation}
as $n\to \infty$.
Combining this with the arguments in equations (62)--(64) of Nam and van den Bosch \cite{NaBo17} we may conclude that the energy $\fancE_V$ splits as
\begin{equation} \label{split1}
\lim_{n\to\infty} \left( \fancE_V(u_n) - \fancE_V(v^0) - \fancE_0(v_n) \right) =0.
\end{equation}
(Note that at this point it is possible that $v^0=0$, i.e., the first component is trivial, but later we will in fact show that $v^0 \not\equiv 0$, and thus it is a ground-state of $\fancE_V$.)
%
Define
	\[
	  m_0 \,:=\, \int_{\RRR} |v^0|^2 \, dx \in [0,M].
	\]
Note also that weak convergence implies $\|v_n\|^2_{L^2(\RRR)}\to M-m_0$.
In case $m_0>0$, we observe that \eqref{split1} also implies
	\[
	  I_V(M) = \fancE_V(v^0) + \lim_{n\to\infty} \fancE_0(v_n) 
	  \geq I_V(m_0)+ \lim_{n\to\infty}I_0(\|v_n\|^2_{L^2(\RRR)}) 
	  = I_V(m_0)+ I_0(M-m_0),
	\]
by the continuity of $I_0$.
As the result of Lemma~\ref{lem:binding} gives the opposite inequality, we conclude that
	\[
	  I_V(M)= I_V(m_0) + I_0(M-m_0).
	\]  
In addition, $\fancE_V(v^0)=I_V(m_0)$; hence, $v^0$ is a ground-state, and $\{v_n\}_{n\in\N}$ is a minimizing sequence for $I_0(m_1)$ with $m_1=M-m_0$, i.e., $I_0(m_1)=\lim_{n\to\infty}\fancE_0(v_n)$.

\smallskip

If $m_0=M$ then the minimizing sequence is compact, and the proof is complete.
Therefore, we will assume for the remainder of the proof that $m_0<M$.

\medskip

\noindent \emph{Step 2.}   Concentration-compactness for $0\leq m_0<M$: there is a subsequence of $\{u_n\}$ (not relabeled), a sequence of points $\{y_n\}\subset \RRR$, constants $m_i>0$, and functions $v^i\in H^1(\R^3)$ for $i=0,1$ with 
\beqn \label{energydecomp} \left.
	\begin{gathered}
u_n - \big(v^0 + v^1(\,\cdot\, - y_n)\big) \to 0 \quad \text{ in } L^2(\R^3),  \\
m_0+m_1 \leq M, \quad \int_{\RRR} |v^i|^2\,dx = m_i, \quad
\fancE_V(v^0) = I_V(m_0), \quad \fancE_0(v^1)=I_0(m_1),   \\
\text{and } \ I_V(M) = I_V(m_0) + I_0(m_1) + I_0(M-m_0-m_1). 
	\end{gathered}\right\}
\eeqn

This concentration-compactness result is very similar to Steps 1--3 of the proof of Theorem~\ref{thm:existence1} , and in fact it follows immediately from steps (i) and (ii) of the proof of Lemma 9 of Nam and van den Bosch \cite{NaBo17}. (See also the Appendix of Lions \cite{Lions1987}.)

\medskip

\noindent \emph{Step 3.} Next, we claim that $v^0\not\equiv 0$.  This follows by the same arguments as in Step 4 of the proof of Theorem~\ref{thm:existence1}.  Indeed, assume the contrary, so $m_0=0$. Then by Lemma~\ref{lem:binding} and \eqref{energydecomp} we would have
	\[
		   I_V(M)\leq I_0(M) \leq I_0(m_1) + I_0(M-m_1) = I_V(M),
	\]
and so $I_V(M)=I_0(M)$.  But the energy functional $\fancE_0$ is translation invariant, hence we may pull back the component, $\tld{u}_n(x):= u_n(x+y_n)$ with the same $\fancE_0$ value, and obtain
\begin{align*}
   I_V(M)&= I_0(M)= \lim_{n\to\infty}\fancE_0(\tld{u}_n) = \lim_{n\to\infty}\left[\fancE_V(\tld{u}_n) + \int_{\RRR} V(x)|\tld{u}_n|^2\,dx \right] \\
   &\geq I_V(M) + \liminf_{n\to\infty} \int_{\RRR} V(x)|\tld{u}_n|^2\,dx = I_V(M) + \int_{\RRR} V(x) |v^1|^2\,dx \\
   &> I_V(M),
\end{align*}
a contradiction. Therefore $m_0>0$, and $v^0$ is a nontrivial ground-state of $I_V(m_0)$.

\medskip

\noindent \emph{Step 4.}  Both $v^0$ and $v^1$ are strictly positive and have exponential decay, i.e., $0<v^i(x)\leq C e^{-\nu |x|}$, for constants $C,\, \nu>0$ and for $i=0,1$.
To show this, we first follow the Appendix in Lions \cite{Lions1987} and note that, by Ekeland's variational principle \cite{Ek1979}, we may find a minimizing sequence $\tilde u_n$ for $I_V(M)$, with 
\beqn\label{close}
\|\tilde u_n- u_n\|_{H^1(\RRR)}\to 0
\eeqn
 and which approximately solve the Euler-Lagrange equations, 
$\| D\fancE_V(\tilde u_n) - \mu_n \tilde u_n\|_{H^{-1}(\RRR)}\to 0$, that is,
$$  -\Delta \tilde u_n + \left[ f(\tilde u_n) -V(x) 
    \left(|\tilde u_n|^2*|x|^{-1}\right) - \mu_n\right] \tilde u_n \longrightarrow 0,
$$
in $H^{-1}(\RRR)$, with $f(t)=\frac{5}3 t^{4/3}-\frac{4}3 t^{2/3}$, and Lagrange multiplier $\mu_n$.  As $\|\tilde u_n\|_{H^1(\RRR)}$ is uniformly bounded, using $\tilde u_n$ as a test function we readily show that the Lagrange multipliers $\mu_n$ are bounded, and passing to a limit along a subsequence, $\mu_n\to \mu$.  Furthermore, by Step 2 and \eqref{close}, $\tilde u_n$ admits the same decomposition \eqref{energydecomp} into components $v^i$ as does $u_n$, and using weak convergence we obtain a limiting PDE for each component,
\begin{gather*}
-\Delta v^0 +\left[f(v^0) - V(x) + \left((v^0)^2*|x|^{-1}\right)\right]v^0
  = \mu v^0, \\
-\Delta v^1 +\left[f(v^1) + \left((v^1)^2*|x|^{-1}\right)\right]v^1
  = \mu v^1,
\end{gather*}
with the {\em same Lagrange multiplier} $\mu$.  By minimization, $v^i\geq 0$ and by the strong maximum principle, we may conclude that each $v^i>0$ for $i=0,1$.

Next, we show that the Lagrange multiplier $\mu<0$.
Following the proof of Theorem~1 of Le Bris \cite{LeBris1993}, we define the spherical mean of an integrable $\psi$ as $\bar\psi(x)={1\over 4\pi}\int_{\sigma\in\mathbb{S}^2} \psi(|x|\sigma)\, dS(\sigma)$, and note that by Newton's Theorem (see Theorem 9.7 of Lieb and Loss \cite{LiebLoss97}),
$$ \overline{(v^0)^2*|x|^{-1}} = \overline{(v^0)^2}*|x|^{-1} \leq 
          |x|^{-1}\int_{\RRR} (v^0)^2\,dx \leq {M\over |x|}.
$$
By (H2), there exists $R>0$ for which $\overline{V}(x)\geq {M\over |x|}$ for all $|x|\geq R$, and hence 
\beqn\label{S4.2}  \overline{(v^0)^2*|x|^{-1} - V} \leq 0 \quad\text{for all $|x|\geq R$.}
\eeqn

Assume for a contradiction that $\mu\geq 0$.
Set $W:= \frac53 |v^0|^{4/3} + [(v^0)^2*|x|^{-1}] - V$, so $v^0$ satisfies the differential inequality,
$$  -\Delta v^0 + W v^0 = \left(\frac23 (v^0)^{5/3} + \mu\right) v^0\geq 0  $$
in $\RRR$.  By \eqref{S4.2}, outside $B_R$, $\overline{W}_+= \frac53\overline{(v^0)^{4/3}}\in L^{3/2}$.  Applying Theorem~7.18 of Lieb \cite{Lieb1981}, we conclude that $v^0\not\in L^2(B_R^c)$, a contradiction.  Thus, $\mu<0$.

Finally, from equation (66) of Lions \cite{Lions1987} we may conclude that the solutions are exponentially localized, 
\beqn\label{exp}
		|\nabla v^i(x)|+|v^i(x)| \leq C e^{-\nu |x|}
\eeqn
for $i=0,1$ with $0<\nu < \sqrt{-\mu}$.

\medskip

\noindent \emph{Step 5.}
We are ready to complete the existence argument. Assume, for a contradiction, that $u_n$ is a minimizing sequence for $I_V(M)$ with no convergent subsequence. By Step 2, we obtain $m_i>0$, $v^i\in H^1(\RRR)$ for $i=0,1$ satisfying  \eqref{energydecomp}. Moreover, we claim that
	\beqn \label{eqn:last-claim}
		I_V(m_0+m_1) < I_V(m_0) + I_0(m_1).
	\eeqn
Assuming the claim holds, taking $m=m_0+m_1$ in Lemma \ref{lem:binding}, and using \eqref{energydecomp}, we obtain that
	\beqns
		\begin{aligned}
			I_V(M) &\leq I_V(m_0+m_1) + I_0(M-m_0-m_1) \\
					   &< I_V(m_0) + I_0(m_1) + I_0(M-m_0-m_1) \\
					   &= I_V(M),
		\end{aligned}
	\eeqns
a contradiction. We therefore conclude that $m_0=M$, and the minimizing sequence converges.

In order to prove \eqref{eqn:last-claim} we will construct a family of functions based on the elements obtained in \eqref{energydecomp}: For $t>0$, let
	\[
		   w_t(x)\, :=\,  v^0(x) +  v^1(x- t\xi),
	\]
where $\xi\in\RRR$ with $|\xi|=1$, and define the admissible function
	\[
		\tld{w}_t(x) \,:=\, \frac{\sqrt{m_0+m_1}\,w_t(x)}{\|w_t\|_{L^2(\RRR)}}
	\] 
so that $\int_{\RRR} \tld{w}_t^2\,dx = m_0+m_1$.  
However, by the exponential decay \eqref{exp} we note that 
	\[
		|\fancE_V(\tld{w}_t) - \fancE_V(w_t)| \leq C e^{-\nu t},
	\]
and hence in order to estimate $\fancE_V(\tld{w}_t)$ it suffices to estimate $\fancE_V(w_t)$.

Again using the exponential decay of the component functions $v^i$, $i=0,1$, and arguing as in the proof of Corollary II.2(ii) in Lions \cite{Lions1987}, for $t>0$ large, we obtain the decomposition
\begin{multline}   \fancE_V(w_t) = \fancE_V(v^0) +  I_0(m_i) \\
 +  2 \int_{\RRR}\!\int_{\RRR} {|v^0(x)|^2 |v^1(y-t\xi)|^2\over 4\pi |x-y|} \,dx dy  -  \int_{\RRR} V(x)|v^1(x-t\xi)|^2\, dx + o\left(\frac{1}{t}\right). \nonumber
\end{multline}

Now we show that for large $t>0$, the second line above is strictly negative.
First, note that
\begin{multline} \nonumber
t \int_{\RRR}\!\int_{\RRR} {|v^0(x)|^2 |v^1(y-t\xi)|^2\over 4\pi |x-y|}\, dx dy = {1\over 4\pi} \int_{\RRR}\!\int_{\RRR} {|v^0(x)|^2 |v^1(y)|^2\over  |\xi - (x-y)/t|} \,dx dy \\  \xrightarrow[{}\ t\to\infty {}\ ]{}  {\| v^0\|_{L^2(\RRR)}^2 \, \| v^1\|_{L^2(\RRR)}^2\over 4\pi|\xi|} = \frac{m_i\,m_j}{4\pi}
\end{multline}
by dominated convergence theorem. That is, this term is $O(t^{-1})$.  

To estimate the other term, first note that \ref{itm:H4} implies that for every $A>0$ there exists $t_0>0$ such that $tV(x) \geq A$ for $|x|=t$ whenever $t \geq t_0$, i.e.,
	\[
		\inf_{|x|=t} V(x) \geq \frac{A}{|x|}
	\]
when $|x|=t \geq t_0$. Next, choose $r_0$ and $C>0$ such that $\int_{B_{r_0}(0)} |v^1|^2\, dx \geq C>0$. Then, for $t>2r_0$ we have that
\begin{align*}
t \int_{\RRR} V(x) |v^1(x-t\xi)|^2\, dx & \geq t\int_{B_{r_0}(0)} V(x+ t\xi) |v^1(x)|^2\, dx \\
&\geq C\, t\, \inf_{x\in B_{r_0}(0)} V(x+ t\xi) \\
&\geq C\, t\, \inf_{t-r_0\leq |x| \leq t+r_0} \frac{A}{|x|} = \frac{C\,t\,A}{t+r_0} \geq  \frac{C\,A}{2}
\end{align*}
for large enough $t>0$.
Since the above holds for all $A>0$ we have that
	\[
		t\int_{\RRR} V(x) |v^1(x-t\xi)|^2\, dx \xrightarrow[{} \  t\to\infty \ {}]{} \infty.
	\] 
In particular, the confinement term dominates the other cross terms for $t>0$ sufficiently large, and thus
	\[
		I_V(m_0+m_1)\leq \fancE_V(\tld{w}_t) <  I_V(m_0) + I_0(m_1),
	\] 
proving our claim \eqref{eqn:last-claim}.
\end{proof}

%

\bigskip

\subsection*{Acknowledgments} The authors would like to thank the referees for their invaluable comments, which allowed us to simplify the proof of Theorem \ref{thm:existence2} significantly. SA, LB, and RC were supported by NSERC (Canada) Discovery Grants.

\bibliographystyle{IEEEtranS}
\def\url#1{}
\bibliography{AlBrChTo17_Bib}

\end{document}